\date{}
\title{\vspace{-1cm} Maximizing the number of independent sets of a fixed
size}
\author{
Wenying Gan \thanks{Department of Mathematics, ETH,  8092 Zurich, Switzerland. Email: ganw@math.ethz.ch.}
\and
Po-Shen Loh \thanks{Department of Mathematical Sciences, Carnegie Mellon
University, Pittsburgh, PA 15213. E-mail: ploh@cmu.edu. Research supported
in part by NSF grant DMS-1201380 and by a USA-Israel BSF Grant.}
\and
Benny Sudakov \thanks{Department of Mathematics, ETH, 8092 Zurich, Switzerland and Department of Mathematics, UCLA, Los Angeles, CA 90095.
Email: bsudakov@math.ucla.edu. Research supported in part by SNSF grant 200021-149111 and by a USA-Israel BSF grant.}
}
\theoremstyle{plain}
\newtheorem{THM}{Theorem}[section]
\newtheorem*{THM*}{Theorem}
\newtheorem{PROP}[THM]{Proposition}
\newtheorem{LEMMA}[THM]{Lemma}
\newtheorem{CONJ}[THM]{Conjecture}
\theoremstyle{definition}
\begin{document}
\maketitle

\begin{abstract}
  Let $i_t(G)$ be the number of independent sets of size $t$ in a graph
  $G$.  Engbers and Galvin asked how large $i_t(G)$ could be in graphs with
  minimum degree at least $\delta$. They further conjectured that when $n\geq
  2\delta$ and $t\geq 3$, $i_t(G)$ is maximized by the complete bipartite
  graph $K_{\delta, n-\delta}$. This conjecture has drawn the attention of
  many researchers recently.  In this short note, we prove this conjecture.
\end{abstract}

\section{Introduction} \label{sec:introduction}

Given a finite graph $G$, let $i_t(G)$ be the number of independent sets of
size $t$ in a graph, and let $i(G)=\sum_{t\geq 0} i_t(G)$ be the total
number of independent sets. There are many extremal results on $i(G)$ and
$i_t(G)$ over families of graphs with various degree restrictions.  Kahn
\cite{K01} and Zhao \cite{Z10} studied the maximum number of independent
sets in a $d$-regular graph.  Relaxing the regularity constraint to a
minimum degree condition, Galvin \cite{G11}  conjectured that the number of
independent sets in an $n$-vertex graph with minimum degree $\delta\leq
\frac{n}{2}$ is maximized by a complete bipartite graph $K_{\delta,
n-\delta}$. This conjecture was recently proved (in stronger form) by
Cutler and Radcliffe \cite{CR13} for all $n$ and $\delta$, and they
characterized the extremal graphs for $\delta > \frac{n}{2}$ as well.

One can further strengthen Galvin's conjecture by asking whether the
extremal graphs also simultaneously maximize the number of independent sets of size $t$, for all $t$.
This claim unfortunately is too strong, as there are easy counterexamples
for $t = 2$. On the other hand, no such examples are known for $t\geq 3$.
Moreover, in this case Engbers and Galvin \cite{EG12} made the following
conjecture.
\begin{CONJ}
  \label{conj:main}
  For every $t \geq 3$ and $\delta \leq n/2$, the complete bipartite graph
  $K_{\delta, n-\delta}$ maximizes the number of independent sets of size
  $t$, over all $n$-vertex graphs with minimum degree at least $\delta$.
\end{CONJ}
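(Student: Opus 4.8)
The plan is to prove the equivalent inequality
\[
i_t(G)\le \binom{n-\delta}{t}+\binom{\delta}{t}
\]
for every $n$-vertex graph $G$ of minimum degree at least $\delta\le n/2$, since a direct computation gives $i_t(K_{\delta,n-\delta})=\binom{n-\delta}{t}+\binom{\delta}{t}$ (an independent set cannot meet both sides, so it is a $t$-subset of one side). I would induct on $n$, the engine being the deletion identity
\[
i_t(G)=i_t(G-v)+i_{t-1}\big(G-N[v]\big),
\]
which splits independent $t$-sets according to whether they avoid or contain a chosen vertex $v$; here $G-N[v]$ is induced on the non-neighbours of $v$, so it has $n-1-d(v)$ vertices and $i_{t-1}\big(G-N[v]\big)\le\binom{n-1-d(v)}{t-1}$. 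Degenerate and small cases ($\delta=0$, or $t>n$) are immediate.

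The easy regime is when $G$ has a vertex $v$ with $d(v)\ge n-\delta$. Deleting $v$ leaves $n-1$ vertices of minimum degree at least $\delta-1$, and since $\delta\le n/2$ forces $\delta-1\le (n-1)/2$, the induction hypothesis gives $i_t(G-v)\le\binom{n-\delta}{t}+\binom{\delta-1}{t}$. Moreover $d(v)\ge n-\delta$ yields $n-1-d(v)\le\delta-1$, so the sets through $v$ contribute at most $\binom{\delta-1}{t-1}$, and Pascal's identity closes the case:
\[
i_t(G)\le\binom{n-\delta}{t}+\binom{\delta-1}{t}+\binom{\delta-1}{t-1}=\binom{n-\delta}{t}+\binom{\delta}{t}.
\]

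The heart of the matter is the complementary regime in which every degree lies in the narrow band $\delta\le d(v)\le n-\delta-1$; note that $K_{\delta,n-\delta}$ has maximum degree exactly $n-\delta$ and so sits on the boundary, which is why single-vertex deletion cannot be tight here. Indeed, whether one removes a vertex of maximum or of minimum degree, the binomial charged to it is at least $\binom{\delta}{t-1}>\binom{\delta-1}{t-1}$ whenever $\delta<n/2$, so the one-vertex induction never closes. Instead I would pass to the global averaging identity $t\,i_t(G)=\sum_{v}i_{t-1}\big(G-N[v]\big)$. The naive estimate $i_{t-1}\big(G-N[v]\big)\le\binom{n-1-d(v)}{t-1}$ gives only $i_t(G)\le\frac{n}{t}\binom{n-1-\delta}{t-1}=\frac{n}{n-\delta}\binom{n-\delta}{t}$, overshooting the target by the factor $n/(n-\delta)$; yet a check on regular examples (cycles, the cube) shows the truth undershoots the target by only a hair. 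The missing savings must come from the edges inside each non-neighbourhood $G-N[v]$, so one should bound $i_{t-1}\big(G-N[v]\big)$ by applying the inductive statement to $G-N[v]$ itself, which requires a usable lower bound on its minimum degree.

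The main obstacle is exactly this control of the subgraphs $G-N[v]$ in the low-degree regime: their minimum degrees can drop sharply (even producing isolated vertices), in which case the inductive bound degenerates to the crude binomial and the argument stalls. I expect the decisive step to be a careful amortised count that either (i) shows that, summed over all $v$, the degree losses inside the various $G-N[v]$ cannot all be severe at once, or (ii) first reduces to an \emph{edge-minimal} graph: since deleting any edge only increases $i_t(G)$, we may assume every edge has an endpoint of degree exactly $\delta$, which forces the set of vertices of degree greater than $\delta$ to be independent and so imposes enough structure to run the refined count. Making such a refinement quantitatively strong enough to beat the factor $n/(n-\delta)$, while respecting $\delta\le n/2$, is where essentially all the difficulty of the conjecture lies.
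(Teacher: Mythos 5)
Your proposal sets up the problem correctly and disposes of the easy regime (some vertex of degree at least $n-\delta$) by exactly the kind of one-vertex deletion that also opens the paper's argument, in complementary form: the paper works with $\overline{G}$, counts $t$-cliques, and its first case is a vertex $v$ with $k_3(v)\le\binom{b-1}{2}$ (where $b=\delta$), which subsumes your low-degree case. But the proposal is not a proof: the entire hard regime, where every degree lies in $[\delta,\,n-1-\delta]$, is left open, and you say so yourself. Neither of your two speculative directions --- an amortised count over the subgraphs $G-N[v]$, or a reduction to edge-minimal graphs --- is what closes the argument, and the second of these is essentially the route that yielded only the earlier partial results (Law--McDiarmid, Alexander--Mink), which suggests it does not scale to all $\delta\le n/2$.

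The two missing ideas are as follows. First, for $t=3$ the paper abandons induction entirely in the hard case and uses the global identity
\begin{displaymath}
  2\left[\binom{n}{3}-\bigl(k_3(G)+k_3(\overline{G})\bigr)\right]
  =\sum_{v}d(v)\bigl(n-1-d(v)\bigr),
\end{displaymath}
obtained by counting ordered triples $(u,v,w)$ with $uv$ an edge and $vw$ a non-edge. When every degree is squeezed into the band $[\delta,\,n-1-\delta]$, each summand is at least $\delta(n-1-\delta)$, and discarding the triangle term gives $i_3(G)\le\binom{n}{3}-\tfrac{1}{2}n\delta(n-1-\delta)$, which is \emph{strictly} below $\binom{n-\delta}{3}+\binom{\delta}{3}$. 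This is precisely the quantitative saving you were looking for to beat the factor $n/(n-\delta)$, and it comes from degrees rather than from any refined control of the subgraphs $G-N[v]$. Second, for $t>3$ the paper does not attack $i_t$ directly at all: it reduces to the $t=3$ case by applying a Kruskal--Katona-type bound to each (complementary) neighbourhood, giving $k_t(v)\le f_t(k_3(v))$ for a convex increasing function $f_t$, and then runs a smoothing/convexity argument on the tuple $\bigl(k_3(v_1),\dots,k_3(v_n)\bigr)$ subject to the $t=3$ bound on $\sum_v k_3(v)$. Without these two ingredients your outline stops exactly where, as you observe, all the difficulty lies.
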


Engbers and Galvin \cite{EG12} proved this for $\delta = 2$ and $\delta =
3$, and for all $\delta > 3$, they proved it when $t \geq 2\delta+1$.
Alexander, Cutler, and Mink \cite{ACM12} proved it for the entire range of
$t$ for bipartite graphs, but it appeared nontrivial to extend the result
to general graphs.  The first result for all graphs and all $t$ was
obtained by Law and McDiarmid \cite{LM12}, who proved the statement for
$\delta\leq n^{1/3}/2$.  This was improved by Alexander and Mink
\cite{AM13}, who required that $\frac{(\delta+1)(\delta+2)}{3} \leq n$. In
this short note, we completely resolve this conjecture.

\begin{THM}\label{thm:main}
  Let $\delta \leq n/2$.  For every $t \geq 3$, every $n$-vertex graph $G$
  with minimum degree at least $\delta$ satisfies $i_t(G)\leq
  i_t(K_{\delta, n-\delta})$, and when $t \leq \delta$, $K_{\delta,
  n-\delta}$ is the unique extremal graph.
\end{THM}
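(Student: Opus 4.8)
Throughout write $N[v]$ for the closed neighbourhood of a vertex $v$ and $d(v)$ for its degree. The plan is to induct on $n$ (with $t\ge 3$ fixed), using for an arbitrary vertex $v$ the deletion recursion
\[
  i_t(G) = i_t(G-v) + i_{t-1}\bigl(G - N[v]\bigr),
\]
together with the trivial bound $i_{t-1}(G-N[v]) \le \binom{n-1-d(v)}{t-1}$, valid because any graph on $m$ vertices has at most $\binom{m}{t-1}$ independent sets of size $t-1$. Two kinds of vertices make the induction close at once. If some $v$ has all of its neighbours of degree at least $\delta+1$, then $G-v$ still has minimum degree $\ge\delta$; feeding the inductive bound for $G-v$ (applied with parameter $\delta$) and $\binom{n-1-d(v)}{t-1}\le\binom{n-1-\delta}{t-1}$ into the recursion and using Pascal's identity $\binom{n-1-\delta}{t}+\binom{n-1-\delta}{t-1}=\binom{n-\delta}{t}$ yields exactly $i_t(G)\le\binom{\delta}{t}+\binom{n-\delta}{t}$. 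Dually, if some $v$ has $d(v)\ge n-\delta$, then $G-v$ has minimum degree $\ge\delta-1$, and the same computation with the inductive bound at parameter $\delta-1$ and $\binom{n-1-d(v)}{t-1}\le\binom{\delta-1}{t-1}$ telescopes, via $\binom{\delta-1}{t}+\binom{\delta-1}{t-1}=\binom{\delta}{t}$, to the same conclusion. (A routine check confirms the minimum-degree hypotheses $\delta\le(n-1)/2$ and $\delta-1\le(n-1)/2$ needed to invoke induction, and small base cases are immediate.)

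These two moves fail simultaneously only in a restricted \emph{balanced} regime: every degree lies in $[\delta,\,n-1-\delta]$—equivalently, both $G$ and its complement have minimum degree at least $\delta$—and, in addition, every vertex has a neighbour of degree exactly $\delta$. The model case is a $\delta$-regular graph with $\delta\le(n-1)/2$. A key point is that the conjectured extremal graph $K_{\delta,n-\delta}$ is \emph{not} balanced: its small side has degree $n-\delta$, so it is caught by the second reduction. Thus one expects strict inequality in the balanced regime and has slack to exploit. This is also the unique place where $t\ge 3$ is needed: for $t=2$ the statement genuinely fails here (for dense regular graphs), so any argument must use $t\ge 3$ at exactly this step.

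To finish the balanced regime I would pass to the second-order recursion, charging each independent $t$-set to its $\binom{t}{2}$ non-adjacent pairs:
\[
  \binom{t}{2}\, i_t(G) = \sum_{\{u,w\}\notin E(G)} i_{t-2}\bigl(G - (N[u]\cup N[w])\bigr) \le \sum_{\{u,w\}\notin E(G)} \binom{\,n-|N[u]\cup N[w]|\,}{t-2}.
\]
The gain is the factor $\binom{t}{2}$ in place of $t$: since $\binom{t}{2}\ge t$ precisely when $t\ge 3$, this is what allows the bound to undercut $\binom{\delta}{t}+\binom{n-\delta}{t}$, mirroring the heuristic that a random $\delta$-regular graph has about $\binom{n}{t}(1-\delta/n)^{\binom{t}{2}}$ independent $t$-sets. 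When $\delta$ is large—exactly the previously open range, since the cited work of Alexander and Mink already covers $\delta=O(\sqrt{n})$—each deleted set $N[u]\cup N[w]$ is large and the residual graphs are correspondingly small, making the estimate strong. The main obstacle is that $|N[u]\cup N[w]| = d(u)+d(w)+2-|N(u)\cap N(w)|$ depends on common-neighbourhood sizes; I would bound the right-hand sum by convexity of $x\mapsto\binom{x}{t-2}$ against the degree sequence, using the balanced upper bound $d(v)\le n-1-\delta$, thereby reducing the whole problem to an extremal estimate on non-edges and their common neighbourhoods. Carrying out this convex optimisation over admissible degree sequences is where the real work lies.

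Finally, the uniqueness assertion for $t\le\delta$ should follow by tracking equality through the reductions: equality in $i_{t-1}(G-N[v])\le\binom{n-1-d(v)}{t-1}$ forces $G-N[v]$ to be edgeless with $d(v)$ at the relevant extreme, while equality in the inductive step forces $G-v$ to be a smaller complete bipartite graph. Since $t\le\delta$ guarantees $\binom{\delta}{t}>0$, these rigid constraints propagate to reconstruct $G=K_{\delta,n-\delta}$ uniquely.
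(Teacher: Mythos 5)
Your first-order reductions are sound: the deletion identity $i_t(G)=i_t(G-v)+i_{t-1}(G-N[v])$ does close the induction whenever some vertex has all neighbours of degree $\geq\delta+1$, or some vertex has degree $\geq n-\delta$, via exactly the Pascal telescopes you write down (the boundary case $n=2\delta$, where $\delta\not\leq(n-1)/2$, is handled by noting that there every vertex has degree $\geq n-\delta$, so your second move always applies). But these are the easy cases. The entire difficulty of the problem sits in your ``balanced regime,'' where both $G$ and $\overline{G}$ have minimum degree at least $\delta$, and there your proof stops: you write down the identity $\binom{t}{2}i_t(G)=\sum_{\{u,w\}\notin E}i_{t-2}(G-(N[u]\cup N[w]))$ and then concede that ``carrying out this convex optimisation\dots is where the real work lies.'' That is not a proof, and the gap is not cosmetic. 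Bounding the right-hand side using only the degree constraints $\delta\leq d(v)\leq n-1-\delta$ gives, in the worst case, roughly $\frac{n(n-1-\delta)}{2}\binom{n-\delta-2}{t-2}/\binom{t}{2}=\frac{n}{n-\delta}\binom{n-\delta}{t}$, which overshoots the target $\binom{\delta}{t}+\binom{n-\delta}{t}$ by a factor approaching $2$. To do better you must control $\sum_{\{u,w\}\notin E}|N(u)\cap N(w)|$, i.e.\ codegrees of non-adjacent pairs --- and for $t=3$ that sum is (cherries) minus $3k_3(\overline{G})$-type quantities, so you are essentially back to the original question. The uniqueness discussion is likewise only gestured at.

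For comparison, the paper resolves precisely this regime by passing to the complement and counting cliques in graphs of maximum degree $\Delta=n-1-\delta$. It inducts on $\delta$ with the case split governed by the local triangle count $k_3(v)$ rather than the degree: if some $k_3(v)\leq\binom{\delta-1}{2}$ the induction closes, and otherwise all degrees of $\overline{G}$ lie in $[\delta,\Delta]$ (your balanced regime), where a one-line double count of paths of length two, $2[\binom{n}{3}-k_3-k_3(\overline{G})]=\sum d(v)(n-1-d(v))\geq nb\Delta$, already gives a \emph{strict} deficit of triangles. General $t$ is then reduced to $t=3$ by applying the Kruskal--Katona theorem to each neighbourhood ($k_t(v)\leq f_t(k_3(v))$ for an explicit convex $f_t$) and optimising $\sum f_t(x_i)$ subject to the $t=3$ bound. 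If you want to salvage your outline, the lesson is that the quantity to induct and double-count on is the triangle/co-triangle statistics, not the degree sequence; as it stands, your argument has a genuine hole exactly where all previous partial results also stopped.
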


\section{Proof}

We will work with the complementary graph, and count cliques instead of
independent sets.  Cutler and Radcliffe \cite{CR13} also discovered that
the complement was more naturally amenable to extension; we will touch on
this in our concluding remarks.  Let us define some notation for use in our
proof.  A $t$-clique is a clique with $t$ vertices.  For a graph $G = (V,
E)$, $\overline{G}$ is its complement, and $k_t(G)$ is the number of
$t$-cliques in $G$.  For any vertex $v\in V$, $N(v)$ is the set of the
neighbors of $v$, $d(v)$ is the degree of $v$, and $k_t(v)$ is the number
of $t$-cliques which contain vertex $v$.  Note that $\sum_{v\in V} k_t(v)=
tk_t(G)$.  We also define $G+H$ as the graph consisting of the disjoint
union of two graphs $G$ and $H$.  By considering the complementary graph,
it is clear that our main theorem is equivalent to the following statement.

\begin{PROP}
  \label{prop:cmp}
  Let $1 \leq b \leq \Delta+1$.  For all $t \geq 3$, $k_t(G)$ is maximized
  by $K_{\Delta+1}+K_{b}$, over $(\Delta+1 + b)$-vertex graphs with maximum
  degree at most $\Delta$.  When $t \leq b$, this is the unique extremal
  graph, and when $b < t \leq \Delta+1$, the extremal graphs are
  $K_{\Delta+1} + H$, where $H$ is an arbitrary $b$-vertex graph.
\end{PROP}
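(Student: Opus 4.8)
The plan is to prove the equivalent clique formulation in Proposition \ref{prop:cmp} by induction on the number of vertices $n = \Delta + 1 + b$. The base case is small and direct. For the inductive step, the natural approach is to pick a vertex $v$ of maximum degree $d(v) = \Delta$ (such a vertex exists, and we may assume it achieves the bound, since otherwise the problem reduces to a smaller maximum-degree regime) and to split the count as $k_t(G) = k_t(v) + k_t(G - v)$, where $k_t(v)$ counts the $t$-cliques through $v$ and $k_t(G-v)$ counts those avoiding $v$. Every $t$-clique through $v$ lives inside $\{v\} \cup N(v)$, so $k_t(v) = k_{t-1}(G[N(v)])$, the number of $(t-1)$-cliques in the neighborhood; since $|N(v)| = \Delta$, this is at most $\binom{\Delta}{t-1}$, with equality exactly when $N(v)$ induces a clique, i.e. $\{v\} \cup N(v)$ is a copy of $K_{\Delta+1}$.

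For the second term, $G - v$ is an $(\Delta + b)$-vertex graph with maximum degree at most $\Delta$, so I would like to invoke the inductive hypothesis to bound $k_t(G-v)$. The subtlety is that the induction is stated for graphs on $(\Delta'+1) + b'$ vertices with maximum degree $\Delta'$, and after deleting $v$ the parameters shift: the new vertex count is $\Delta + b = (\Delta) + b$, which we want to read as $(\Delta' + 1) + b'$ with $\Delta' = \Delta$ forcing $b' = b - 1$, or alternatively with a smaller $\Delta'$. I would therefore bound $k_t(G - v)$ by the extremal value $k_t(K_{\Delta+1} + K_{b-1})$ obtained from the inductive hypothesis (checking that $1 \le b - 1 \le \Delta + 1$ when $b \ge 2$; the $b = 1$ case is handled separately since then $G - v$ has maximum degree strictly less than its vertex count minus one). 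Combining the two bounds gives
\begin{equation*}
  k_t(G) \leq \binom{\Delta}{t-1} + k_t(K_{\Delta+1} + K_{b-1}),
\end{equation*}
and the final step is the algebraic identity that the right-hand side equals $k_t(K_{\Delta+1} + K_b)$, which follows by counting $t$-cliques of $K_{\Delta+1} + K_b$ according to whether they use the fresh vertex added to $K_{b-1}$.

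The main obstacle I anticipate is controlling the equality and uniqueness analysis, together with the mismatch in how the two terms attain their maxima simultaneously. The bound $k_t(v) \le \binom{\Delta}{t-1}$ is tight only when $N(v)$ is a clique, while the bound on $k_t(G-v)$ is tight only when $G - v$ is extremal; I must verify that these two conditions are compatible and force the global structure $K_{\Delta+1} + K_b$ (or $K_{\Delta+1} + H$ in the range $b < t \le \Delta+1$, where the $b$-part contributes no $t$-cliques and is thus unconstrained). A second delicate point is the choice of $v$: to make the clean recursion work I should choose $v$ to maximize $k_t(v)$ rather than merely its degree, or argue that an extremal $G$ must contain a vertex whose closed neighborhood is a clique. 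Handling the boundary cases $t = \Delta + 1$, $t > \Delta + 1$ (where no $t$-clique fits and both sides vanish), and the transition at $t = b$ separating the unique-extremal and non-unique regimes will require careful but routine bookkeeping rather than a new idea.
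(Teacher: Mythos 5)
There is a genuine gap at the heart of your inductive step: the claimed algebraic identity is false. Writing $n = \Delta+1+b$, the target value is $k_t(K_{\Delta+1}+K_b) = \binom{\Delta+1}{t} + \binom{b}{t}$, and the inductive bound on $G-v$ (an $(\Delta+1)+(b-1)$-vertex graph of maximum degree at most $\Delta$) is $k_t(K_{\Delta+1}+K_{b-1}) = \binom{\Delta+1}{t} + \binom{b-1}{t}$. The difference between these is $\binom{b-1}{t-1}$, i.e.\ the marginal contribution of the vertex you add to the \emph{small} clique $K_{b-1}$ --- not $\binom{\Delta}{t-1}$, which is the contribution of a vertex whose closed neighborhood is a $K_{\Delta+1}$. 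Since $b-1 \le \Delta$, your bound $k_t(G) \le \binom{\Delta}{t-1} + k_t(K_{\Delta+1}+K_{b-1})$ overshoots the target by $\binom{\Delta}{t-1} - \binom{b-1}{t-1} \ge 0$, strictly so whenever $b-1 < \Delta$ and $t-1 \le \Delta$. (Concretely, for $\Delta=3$, $b=2$, $t=3$: your right-hand side is $3 + 4 = 7$, but $k_3(K_4+K_2)=4$.) The two bounds also cannot be simultaneously tight --- if $\{v\}\cup N(v)$ spans a $K_{\Delta+1}$ then every vertex of $N(v)$ has degree $\Delta-1$ in $G-v$ and so $G-v$ cannot contain the $K_{\Delta+1}$ needed for it to be extremal --- but your argument as written does not exploit this tension, and making it rigorous is essentially the whole difficulty, not routine bookkeeping.

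The paper's proof inverts your choice of $v$: the recursion closes cleanly only if one can delete a vertex with $k_t(v) \le \binom{b-1}{t-1}$ (a ``cheap'' vertex, matching the Pascal identity $\binom{b-1}{t}+\binom{b-1}{t-1}=\binom{b}{t}$), and such a vertex need not exist. The case where every vertex satisfies $k_3(v) > \binom{b-1}{2}$ --- which forces $b \le d(v) \le \Delta$ for all $v$ --- is handled by an entirely separate, non-inductive argument: for $t=3$, a global double count of ordered triples $(u,v,w)$ with $uv$ an edge and $vw$ a non-edge yields
\begin{displaymath}
  k_3(G) \le \binom{n}{3} - \tfrac{1}{2}\sum_{v} d(v)\bigl(n-1-d(v)\bigr) \le \binom{n}{3} - \tfrac{nb\Delta}{2} < \binom{\Delta+1}{3}+\binom{b}{3},
\end{displaymath}
and for $t>3$ the reduction to $t=3$ uses a Kruskal--Katona-type bound $k_t(v) \le f_t(k_3(v))$ together with a convexity argument. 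Your proposal contains no mechanism for this second case, and the first case as you have set it up (deleting a maximum-degree vertex) yields an inequality that is too weak to close the induction.
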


\noindent \textbf{Remark.} When $b \leq 0$, the number of $t$-cliques in
graphs with maximum degree at most $\Delta$ is trivially maximized by the
complete graph.  On the other hand, when $b > (\Delta+1)$, the problem
becomes much more difficult, and our investigation is still ongoing.  This
paper focuses on the first complete segment $1 \leq b \leq \Delta+1$,
which, as mentioned in the introduction, was previously attempted in
\cite{AM13, EG12, LM12}.

\medskip

Although our result holds for all $t \geq 3$, it turns out that the main
step is to establish it for the case $t=3$ using induction and
double-counting.  Afterward, a separate argument will reduce the general $t
> 3$ case to this case of $t=3$.

\begin{LEMMA}
  Proposition \ref{prop:cmp} is true when $t = 3$.
  \label{lem:t=3}
\end{LEMMA}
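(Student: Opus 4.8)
The plan is to prove the $t=3$ statement by strong induction on the number of vertices $n=\Delta+1+b$, with the double-counting identity $3k_3(G)=\sum_{v}k_3(v)$ as the basic tool, where $k_3(v)=k_2(G[N(v)])$ is the number of edges among the neighbours of $v$ and hence $k_3(v)\le\binom{d(v)}{2}\le\binom{\Delta}{2}$. The base case $b=0$ is the trivial situation from the Remark: on $\Delta+1$ vertices with maximum degree at most $\Delta$ the number of triangles is at most $\binom{\Delta+1}{3}$, with equality only for $K_{\Delta+1}$, and this will also seed the uniqueness bookkeeping.

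First I would isolate the easy reduction. Pick a vertex $v$ of minimum degree and write $k_3(G)=k_3(v)+k_3(G-v)$. The graph $G-v$ has $\Delta+b=(\Delta+1)+(b-1)$ vertices and maximum degree at most $\Delta$, so the induction hypothesis at parameter $b-1$ gives $k_3(G-v)\le\binom{\Delta+1}{3}+\binom{b-1}{3}$. If $\delta(G)\le b-1$, then $k_3(v)\le\binom{b-1}{2}$, and Pascal's identity $\binom{b-1}{3}+\binom{b-1}{2}=\binom{b}{3}$ yields $k_3(G)\le\binom{\Delta+1}{3}+\binom{b}{3}$ at once. Tracking when each inequality is tight (the removed vertex must have degree exactly $b-1$ with a clique neighbourhood, and $G-v$ must itself be extremal) should propagate the stated uniqueness for $t\le b$ and the characterisation $K_{\Delta+1}+H$ for $b<t\le\Delta+1$.

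The crux, and the step I expect to be the main obstacle, is the complementary case in which no low-degree vertex exists, i.e.\ $\delta(G)\ge b$. Here the single-deletion bound overshoots the target by exactly $\binom{\Delta}{2}-\binom{b-1}{2}\ge 0$, so removing one vertex cannot close the gap; this is precisely the regime of small $b$ relative to $\Delta$ that was left open in \cite{AM13, LM12}. Note also that the natural averaging refinement $n\,k_3(G)=\sum_v[k_3(v)+k_3(G-v)]$ is useless on its own, since it only reproduces the identity $\sum_v k_3(G-v)=(n-3)k_3(G)$. What must be exploited instead is the genuine tension between a large minimum degree and a high concentration of triangles: forcing every vertex to have degree at least $b$ prevents the neighbourhoods from nesting into a single near-clique.

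To make that tension quantitative I would pass to the defect form of the identity, $3k_3(G)=\sum_v\binom{d(v)}{2}-p(G)$, where $p(G)$ counts induced paths on three vertices (cherries whose endpoints are non-adjacent). Since $\sum_v\binom{d(v)}{2}\le(\Delta+1+b)\binom{\Delta}{2}$, with the near-regular graphs being the binding case, it would suffice to show that $\delta(G)\ge b$ forces $p(G)\ge b\bigl(\binom{\Delta}{2}-\binom{b-1}{2}\bigr)$, i.e.\ that a large minimum degree manufactures enough open cherries to account for the surplus. An equivalent and perhaps more tractable route is to peel the neighbourhood $N(v)$ of a \emph{maximum}-degree vertex $v$: every vertex of $N(v)$ already sees $v$, so its degree within $G-v$ is at most $\Delta-1$, and feeding this improved cap into a strengthened induction hypothesis that records a distinguished vertex set carrying a smaller degree bound should absorb exactly the excess $\binom{\Delta}{2}-\binom{b-1}{2}$. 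Establishing one of these two estimates is the technical heart of the lemma, after which the convexity of $x\mapsto\binom{x}{3}$ and the equality analysis assemble the extremal characterisation.
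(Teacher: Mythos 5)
Your first case ($\delta(G)\le b-1$: delete a minimum-degree vertex, apply induction and Pascal's identity) matches the paper's opening step, except that the paper splits on the weaker condition ``some $k_3(v)\le\binom{b-1}{2}$'' rather than on degree; that difference is harmless. The problem is the complementary case, which you correctly identify as the crux but do not prove, and the specific inequality you propose to prove there is actually \emph{false}. You reduce to showing that $\delta(G)\ge b$ forces $p(G)\ge b\bigl(\binom{\Delta}{2}-\binom{b-1}{2}\bigr)$, but this reduction first replaces $\sum_v\binom{d(v)}{2}$ by the crude upper bound $n\binom{\Delta}{2}$, and the residual claim about $p(G)$ does not survive that loss: take $b=2$, $\Delta=10$, and let $G$ be the $13$-cycle. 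Then $\delta(G)=b$ and every vertex contributes exactly one induced path as a center, so $p(G)=13$, far below the required $b\binom{\Delta}{2}=90$. (Here $k_3(G)=0$, so the lemma itself holds trivially; it is your intermediate inequality that fails.) Any repair must play the deficiency $\sum_v\bigl(\binom{\Delta}{2}-\binom{d(v)}{2}\bigr)$ off against $p(G)$ jointly, which is not what you wrote. Your alternative route (a strengthened induction recording a distinguished set with a smaller degree cap) is not formulated precisely enough to assess, so it also does not close the gap.

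The paper's resolution of this case uses a different double count that penalizes \emph{both} low and high degrees simultaneously. Counting ordered triples $(u,v,w)$ with $uv$ an edge and $vw$ a non-edge, one gets
\begin{displaymath}
2\left[\binom{n}{3}-\bigl(k_3(G)+k_3(\overline{G})\bigr)\right]=\sum_{v\in V}d(v)(n-1-d(v)),
\end{displaymath}
since every $3$-set inducing one or two edges contributes exactly $2$. Discarding $k_3(\overline{G})\ge 0$ and using that $b\le d(v)\le\Delta$ with $b+\Delta=n-1$ gives $d(v)(n-1-d(v))\ge b\Delta$ for every $v$ (the quadratic is minimized at the endpoints of the interval), whence $k_3(G)\le\binom{n}{3}-\tfrac{1}{2}nb\Delta=\binom{\Delta+1}{3}+\binom{b}{3}-\tfrac{1}{2}b(\Delta+1-b)$, which is strictly suboptimal since $b\le\Delta$. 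The essential point you are missing is the inclusion of the term $k_3(\overline{G})$, which lets one simply drop a nonnegative quantity instead of having to prove a lower bound on the number of induced paths.
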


\begin{proof}
  We proceed by induction on $b$.  The base case $b=0$ is trivial.  Now
  assume it is true for $b-1$.  Suppose first that $k_3(v) \leq
  \binom{b-1}{2}$ for some vertex $v$.  Applying the inductive hypothesis
  to $G - v$, we see that
  \begin{displaymath}
    k_3(G)
    \leq
    k_3(G-v) + k_3(v)
    \leq
    \binom{\Delta+1}{3} + \binom{b-1}{3} + \binom{b-1}{2}
    \leq
    \binom{\Delta+1}{3} + \binom{b}{3} \,,
  \end{displaymath}
  and equality holds if and only if $G-v$ is optimal and $k_3(v) =
  \binom{b-1}{2}$.  By the inductive hypothesis, $G-v$ is $K_{\Delta+1} +
  H'$, where $H'$ is a $(b-1)$-vertex graph.  The maximum degree
  restriction forces $v$'s neighbors to be entirely in $H'$, and so $G =
  K_{\Delta+1} + H$ for some $b$-vertex graph $H$. Moreover, since $k_3(v) =
  \binom{b-1}{2}$ we get that for $b\geq 3$, $H$ is a clique.

  This leaves us with the case where $k_3(v) > \binom{b-1}{2}$ for every
  vertex $v$, which forces $b \leq d(v) \leq \Delta$.  We will show that
  here, the number of 3-cliques is strictly suboptimal.  The number of
  triples $(u, v, w)$ where $uv$ is an edge and $vw$ is not an edge is
  clearly $\sum_{i=1}^n d(v)(n-1-d(v))$.  Also, every set of 3 vertices
  either contributes 0 to this sum (if either all or none of the 3 edges
  between them are present), or contributes 2 (if they induce exactly 1 or
  exactly 2 edges).  Therefore,
  \begin{displaymath}
    2 \left[ \binom{n}{3} - (k_3(G)+k_3(\overline{G})) \right]
    =
    \sum_{v \in V} d(v)(n-1-d(v)) \,.
  \end{displaymath}
  Rearranging this equality and applying $k_3(\overline{G})\geq 0$, we find
  \begin{equation}
    \label{eqn:k3i}
    k_3(G)\leq \binom{n}{3}-\frac{1}{2}\sum_{{v\in V}}d(v)(n-1-d(v)) \,.
  \end{equation}
  Since we already bounded $b\leq d(v)\leq \Delta$, and $b + \Delta = n-1$
  by definition, we have $d(v)(n-1-d(v))\geq b\Delta$. Plugging this back
  into \eqref{eqn:k3i} and using $n = (\Delta + 1) + b$,
  \begin{displaymath}
    k_3(G)\leq \binom{n}{3}-\frac{nb \Delta}{2}
    =
    \binom{\Delta+1}{3}+\binom{b}{3}-\frac{b(\Delta+1-b)}{2}
    < \binom{\Delta+1}{3}+\binom{b}{3} \,,
  \end{displaymath}
  because $b \leq \Delta$.  This completes the case where every vertex has
  $k_3(v) > \binom{b-1}{2}$.
\end{proof}

We reduce the general case to the case of $t=3$ via the
following variant of the celebrated theorem of Kruskal-Katona \cite{K68, K63},
which appears as Exercise 31b in Chapter 13 of from Lov\'asz's book
\cite{Lo07}.  Here, the generalized binomial coefficient $\binom{x}{k}$ is
defined to be the product $\frac{1}{k!} (x)(x-1)(x-2) \cdots (x-k+1)$,
which exists for non-integral $x$.

\begin{THM}\label{thm:kk}
Let $k \geq 3$ be an integer, and let $x \geq
  k$ be a real number.  Then, every graph with exactly $\binom{x}{2}$ edges
  contains at most $\binom{x}{k}$ cliques of order $k$.
\end{THM}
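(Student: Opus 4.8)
The plan is to derive this clique statement from the classical Kruskal--Katona theorem, in the real-parameter form due to Lov\'asz: if $\cF$ is a family of $r$-element sets and one writes $|\cF| = \binom{z}{r}$ for a real number $z \geq r$, then its lower shadow $\partial\cF$, the family of all $(r-1)$-element sets contained in some member of $\cF$, satisfies $|\partial\cF| \geq \binom{z}{r-1}$. The only additional structural fact I need is specific to cliques: every $(j-1)$-element subset of a $j$-clique is again a clique, so the shadow of the family of $j$-cliques of $G$ is contained in the family of its $(j-1)$-cliques. Consequently $k_{j-1}(G) \geq \binom{z}{j-1}$ whenever the number of $j$-cliques equals $\binom{z}{j}$.

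First I would handle the trivial case in which $G$ has no $k$-clique, and otherwise set $N := k_k(G) \geq 1$ and write $N = \binom{y}{k}$ for the unique real $y \geq k$; this representation exists because $\binom{y}{k}$ is continuous and strictly increasing on $[k-1,\infty)$, equals $1$ at $y=k$, and tends to infinity. The core of the argument is then a downward induction on $j$ proving that $k_j(G) \geq \binom{y}{j}$ for every $2 \leq j \leq k$. The base case $j=k$ is the choice of $y$. For the inductive step, assuming $k_j(G) \geq \binom{y}{j}$, I write $k_j(G) = \binom{z}{j}$, note $z \geq y$ by monotonicity, and apply the shadow inequality to the family of $j$-cliques to obtain $k_{j-1}(G) \geq \binom{z}{j-1} \geq \binom{y}{j-1}$.

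Taking $j=2$ at the end of this chain shows that $G$ has at least $\binom{y}{2}$ edges. Since $G$ has exactly $\binom{x}{2}$ edges by hypothesis, $\binom{x}{2} \geq \binom{y}{2}$, and the monotonicity of $t \mapsto \binom{t}{2}$ on $[1,\infty)$ forces $x \geq y$. Then $\binom{x}{k} \geq \binom{y}{k} = N = k_k(G)$, which is precisely the claimed bound. I expect the main obstacle to be purely bookkeeping rather than conceptual: the telescoping must be carried out entirely with generalized binomial coefficients, re-expressing each clique count $k_j(G)$ as some $\binom{z}{j}$ and comparing $z$ to $y$ through the monotonicity of $\binom{\cdot}{j}$, so that the single parameter $y$ propagates cleanly from the top level down to the edge count. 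Once the observation that shadows of cliques are cliques is in hand, the remainder is a routine iterated application of Kruskal--Katona.
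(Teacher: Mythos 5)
The paper does not actually prove Theorem \ref{thm:kk}: it imports it verbatim as Exercise 31b of Chapter 13 of Lov\'asz's book, so there is no internal argument to compare yours against. Your derivation is correct and is essentially the standard route to this statement: you reduce it to the Lov\'asz (real-parameter) shadow form of Kruskal--Katona, use the observation that the shadow of the family of $j$-cliques is contained in the family of $(j-1)$-cliques, and telescope downward from $j=k$ to $j=2$. All the delicate points check out: the parameter $y$ with $k_k(G)=\binom{y}{k}$ exists and is unique because $\binom{\cdot}{k}$ is continuous and strictly increasing on $[k-1,\infty)$ with value $1$ at $k$; at each stage $k_j(G)\geq\binom{y}{j}\geq 1$ guarantees that the re-parametrization $k_j(G)=\binom{z}{j}$ with $z\geq j$ is legitimate and that $z\geq y$ by monotonicity; and the final comparison $\binom{x}{2}\geq\binom{y}{2}$ forces $x\geq y$ and hence $\binom{x}{k}\geq\binom{y}{k}=k_k(G)$. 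The only caveat worth stating explicitly is that your proof is a reduction, not a from-scratch argument: the entire combinatorial content is carried by the shadow inequality $|\partial\cF|\geq\binom{z}{r-1}$, which you must cite (it is itself a nontrivial strengthening-free reformulation of Kruskal--Katona due to Lov\'asz). Given that the paper is content to cite the clique form directly, your version arguably does more work than necessary for the paper's purposes, but it is a complete and correct proof of the quoted theorem.
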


We now use Lemma \ref{lem:t=3} and Theorem \ref{thm:kk} to finish the
general case of Proposition \ref{prop:cmp}.

\begin{LEMMA}
  \label{lemma:t>3}
  If Proposition \ref{prop:cmp} is true for $t = 3$, then it is also true
  for $t > 3$.
\end{LEMMA}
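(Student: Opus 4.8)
The plan is to induct on $b$ for each fixed $t \geq 4$, mirroring the structure of the proof of Lemma \ref{lem:t=3} but using the $t = 3$ case as the engine. The base cases $b \in \{0,1\}$ are handled directly. For the inductive step, the central tool is a per-vertex application of Theorem \ref{thm:kk}: for any vertex $v$, the number of $t$-cliques through $v$ equals the number of $(t-1)$-cliques in the induced graph $G[N(v)]$, and this graph has at most $\Delta$ vertices and exactly $k_3(v)$ edges, since its edges are precisely the triangles through $v$. Writing $k_3(v) = \binom{x_v}{2}$ for a real $x_v \le \Delta$, Theorem \ref{thm:kk} gives $k_t(v) \le \binom{x_v}{t-1}$.

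I would then split according to whether some vertex is ``light.'' First suppose some $v$ satisfies $k_t(v) \le \binom{b-1}{t-1}$. Deleting $v$ leaves a graph on $(\Delta+1) + (b-1)$ vertices with maximum degree at most $\Delta$, so the inductive hypothesis gives $k_t(G-v) \le \binom{\Delta+1}{t} + \binom{b-1}{t}$; adding $k_t(v)$ and using $\binom{b-1}{t} + \binom{b-1}{t-1} = \binom{b}{t}$ yields the desired bound, with the equality analysis reproducing the extremal characterization exactly as in the proof of Lemma \ref{lem:t=3}.

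The remaining case is that every vertex satisfies $k_t(v) > \binom{b-1}{t-1}$. Combined with $k_t(v) \le \binom{x_v}{t-1}$ and the monotonicity of $\binom{\cdot}{t-1}$, this forces $x_v > b-1$, hence $k_3(v) = \binom{x_v}{2} > \binom{b-1}{2}$ for every $v$. Set $L = \binom{b-1}{2}$ and $S = \binom{\Delta}{2}$, and let $F$ denote the function $\binom{x}{t-1}$ expressed in terms of $s = \binom{x}{2}$; I would check that $F$ is increasing and convex on $[L,S]$, a routine single-variable computation valid because $t-1 \ge 3$. Bounding $F$ above by its chord over $[L,S]$ and summing over all $n = (\Delta+1)+b$ vertices gives
\[
  \sum_{v} \binom{x_v}{t-1}
  = \sum_{v} F(s_v)
  \le n\,F(L) + \frac{F(S)-F(L)}{S-L}\left( \sum_{v} s_v - nL \right),
\]
where $s_v = k_3(v)$. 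Now I invoke the $t=3$ case of Proposition \ref{prop:cmp}, namely $\sum_v s_v = 3k_3(G) \le 3\binom{\Delta+1}{3} + 3\binom{b}{3} = (\Delta+1)S + bL$; since the chord's slope is positive, substituting this bound and simplifying (the identities $(\Delta+1)S + bL - nL = (\Delta+1)(S-L)$ and $(\Delta+1)\binom{\Delta}{t-1} = t\binom{\Delta+1}{t}$ make everything collapse) produces exactly $t\,k_t(G) \le (\Delta+1)\binom{\Delta}{t-1} + b\binom{b-1}{t-1} = t\binom{\Delta+1}{t} + t\binom{b}{t}$.

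The main obstacle is the interface between the two cases: the chord bound in the second case is valid only because every $k_3(v)$ exceeds $L = \binom{b-1}{2}$, which is precisely what the case hypothesis $k_t(v) > \binom{b-1}{t-1}$ buys via Kruskal--Katona. Without this lower endpoint a convex function can lie above its chord, and the crude relaxation that ignores it genuinely overshoots the target, so the split into a light-vertex case (handled by induction) and a heavy case (handled by the chord) is essential rather than cosmetic. A secondary technical point is the degenerate regime $b < t-1$, where $\binom{b-1}{t-1} = 0$ and one must argue directly that $k_t(v) > 0$ already forces $k_3(v) \ge \binom{t-1}{2} > \binom{b-1}{2}$, keeping each $s_v$ inside the interval on which $F$ is convex.
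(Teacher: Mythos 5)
Your proposal follows the same strategy as the paper's proof: induction on $b$, a per-vertex application of Theorem \ref{thm:kk} to turn the triangle count at $v$ into a bound on $k_t(v)$, a light-vertex case closed by the inductive hypothesis, and a convexity argument driven by the $t=3$ bound $\sum_v k_3(v)\le 3\binom{\Delta+1}{3}+3\binom{b}{3}$. Your chord inequality is the closed-form version of the paper's smoothing-to-extreme-points step, and the collapse to $(\Delta+1)\binom{\Delta}{t-1}+b\binom{b-1}{t-1}=t\binom{\Delta+1}{t}+t\binom{b}{t}$ is identical. Two points, however, are not yet proofs.

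First, when $b\le t-2$ your $F$ is not convex, increasing, or even nonnegative on all of $[L,S]$: for $u(s)<t-2$ the generalized binomial $\binom{u(s)}{t-1}$ dips below zero (e.g.\ $\binom{1.5}{3}<0$), so ``bounding $F$ above by its chord over $[L,S]$'' is illegitimate as stated. You flag this and propose moving the left endpoint up to $\binom{t-1}{2}$, but then the chord is anchored at a point where $F=1$ while the sum constraint is still anchored at $L=\binom{b-1}{2}$, and the computation no longer collapses to the target without an additional estimate. The paper's fix is to truncate the bound to $0$ for $u(x)<t-2$ (still a valid upper bound on $k_t(v)$, since $\binom{x}{k}<1$ for $k-1<x<k$) and verify that the \emph{truncated} function is convex everywhere; equivalently, anchor your chord at $\binom{t-2}{2}$, where the truncated bound vanishes, and use $\binom{b-1}{2}\le\binom{t-2}{2}$. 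Second, Proposition \ref{prop:cmp} also asserts a characterization of the extremal graphs, and your heavy case delivers only a non-strict inequality; the paper proves strict suboptimality there, using strict convexity and monotonicity near $x\approx\binom{\Delta}{2}$ after separately disposing of the sub-cases $k_3(v)=\binom{\Delta}{2}$ and $t=\Delta+1$. In your framework this is recoverable --- equality in the chord argument forces $k_3(G)=\binom{\Delta+1}{3}+\binom{b}{3}$, so by the $t=3$ characterization $G=K_{\Delta+1}+H$, contradicting the heavy-case hypothesis because any vertex of $H$ has $k_t(v)\le\binom{b-1}{t-1}$ --- but it must be written down.
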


\begin{proof}
  Fix any $t \geq 4$.  We proceed by induction on $b$. The base case $b=0$
  is trivial. For the inductive step, assume the result is true for $b-1$.
  If there is a vertex $v$ such that $k_3(v) \leq \binom{b-1}{2}$, then by
  applying Theorem \ref{thm:kk} to the subgraph induced by $N(v)$, we find
  that there are at most $\binom{b-1}{t-1}$ cliques of order $t-1$ entirely
  contained in $N(v)$.  The $t$-cliques which contain $v$ correspond
  bijectively to the $(t-1)$-cliques in $N(v)$, and so $k_t(v) \leq
  \binom{b-1}{t-1}$.  The same argument used at the beginning of Lemma
  \ref{lem:t=3} then correctly establishes the bound and characterizes the
  extremal graphs.

  If some $k_3(v) = \binom{\Delta}{2}$, then the maximum degree condition
  implies that the graph contains a $K_{\Delta+1}$ which is disconnected
  from the remaining $b \leq \Delta+1$ vertices, and the result also easily
  follows.  Therefore, it remains to consider the case where all
  $\binom{b-1}{2} < k_3(v) < \binom{\Delta}{2}$, in which we will prove
  that the number of $t$-cliques is strictly suboptimal.  It is well-known
  and standard that for each fixed $k$, the binomial coefficient
  $\binom{x}{k}$ is strictly convex and increasing in the real variable $x$
  on the interval $x \geq k-1$.  Hence, $\binom{k}{k} = 1$ implies that
  $\binom{x}{k} < 1$ for all $k-1 < x < k$, and so Theorem \ref{thm:kk}
  then actually applies for all $x \geq k-1$.  Thus, if we define $u(x)$ to
  be the positive root of $\binom{u}{2} = x$, i.e., $u(x) = \frac{1 +
    \sqrt{1+8x}}{2}$, and let
  \begin{equation}
    f_t(x)
    =
    \left\{
      \begin{array}{ll}
        {0} & \text{if } u(x) < t-2 \\
        {\binom{u(x)}{t-1}} & \text{if } u(x) \geq t-2 \,,
      \end{array}
      \right.
  \end{equation}
  the application of Kruskal-Katona in the previous paragraph establishes
  that $k_t(v) \leq f_t(k_3(v))$.

  We will also need that $f_t(x)$ is strictly convex for $x >
  \binom{t-2}{2}$.  For this, observe that by the generalized product rule,
  $f_t'(x) = u' \cdot \left[ (u-1)(u-2)\cdots(u-(t-2)) + \cdots +
  u(u-1)\cdots(u-(t-3)) \right]$, which is $u'(x)$ multiplied by a sum of
  $t-1$ products.  Since $u'(x) = \frac{2}{\sqrt{1+8x}}$, for any
  constant $C$, $(u')(u-C) = 1 - \frac{2C-1}{\sqrt{1+8x}}$.  Note that
  this is a positive increasing function when $C \in \{1, 2\}$ and $x
  > \binom{t-2}{2}$.  In particular, since $t \geq 4$, each of the
  $t-1$ products contains a factor of $(u-1)$ or $(u-2)$, or possibly
  both; we can then always select one of them to absorb the $(u')$
  factor, and conclude that $f_t'(x)$ is the sum of $t-1$ products, each
  of which is composed of $t-2$ factors that are positive increasing
  functions on $x > \binom{t-2}{2}$.  Thus $f_t(x)$ is strictly convex
  on that domain, and since $f_t(x) = 0$ for $x \leq \binom{t-2}{2}$, it
  is convex everywhere.

  If $t = \Delta+1$, there will be no $t$-cliques in $G$ unless $G$
  contains a $K_{\Delta+1}$, which must be isolated because of the maximum
  degree condition; we are then finished as before.  Hence we may assume $t
  \leq \Delta$ for the remainder, which in particular implies that $f_t(x)$
  is strictly convex and strictly increasing in the neighborhood of $x
  \approx \binom{\Delta}{2}$.  Let the vertices be $v_1, \ldots, v_n$, and
  define $x_i = k_3(v_i)$.  We have $tk_t(G)=\sum_{v\in V} k_t(v) \leq
  \sum_{i=1}^n {f_t(x_i)}$, and so it suffices to show that $\sum f_t(x_i)
  < t \binom{\Delta+1}{t} + t \binom{b}{t}$ under the following conditions,
  the latter of which comes from Lemma \ref{lem:t=3}.
  \begin{equation}
    \binom{b-1}{2} < x_i < \binom{\Delta}{2} \,;
    \quad\quad
    \sum_{i=1}^n x_i \leq 3 \binom{\Delta+1}{3} + 3 \binom{b}{3} \,.
    \label{eq:domain-x}
  \end{equation}

  To this end, consider a tuple of real numbers $(x_1, \ldots, x_n)$ which
  satisfies the conditions.  Although \eqref{eq:domain-x} constrains each
  $x_i$ within an open interval, we will perturb the $x_i$ within the
  closed interval which includes the endpoints, in such a way that the
  objective $\sum f_t(x_i)$ is nondecreasing, and we will reach a tuple
  which achieves an objective value of exactly $t \binom{\Delta+1}{t} + t
  \binom{b}{t}$.  Finally, we will use our observation of strict convexity
  and monotonicity around $x \approx \binom{\Delta}{2}$ to show that one of
  the steps strictly increased $\sum f_t(x_i)$, which will complete the
  proof.

  First, since the upper limit for $\sum x_i$ in \eqref{eq:domain-x} is
  achievable by setting $\Delta+1$ of the $x_i$ to $\binom{\Delta}{2}$ and
  $b$ of the $x_i$ to $\binom{b-1}{2}$, and $f_t(x)$ is nondecreasing, we
  may replace the $x_i$'s with another tuple which has equality for $\sum
  x_i$ in \eqref{eq:domain-x}, and all $\binom{b-1}{2} \leq x_i \leq
  \binom{\Delta}{2}$.  Next, by convexity of $f_t(x)$, we may push apart
  $x_i$ and $x_j$ while conserving their sum, and the objective is
  nondecreasing.  After a finite number of steps, we arrive at a tuple in
  which all but at most one of the $x_i$ is equal to either the lower limit
  $\binom{b-1}{2}$ or the upper limit $\binom{\Delta}{2}$, and $\sum x_i =
  3 \binom{\Delta+1}{3} + 3 \binom{b}{3}$.  However, since this value of
  $\sum x_i$ is achievable by $\Delta+1$ many $\binom{\Delta}{2}$'s and $b$
  many $\binom{b-1}{2}$'s, this implies that in fact, the tuple of $x_i$'s
  has precisely this form.  (To see this, note that by an affine
  transformation, the statement is equivalent to the fact that if $n$ and
  $k$ are integers, and $0 \leq y_i \leq 1$ are $n$ real numbers which sum
  to $k$, all but one of which is at an endpoint, then exactly $k$ of the
  $y_i$ are equal to 1 and the rest are equal to 0.)  Thus, our final
  objective is equal to
  \begin{displaymath}
    (\Delta+1) \binom{\Delta}{t-1} + b \binom{b-1}{t-1}
    =
    t \binom{\Delta+1}{t} + t \binom{b}{t} \,,
  \end{displaymath}
  as claimed.  Finally, since some $x_i$ take the value
  $\binom{\Delta}{2}$, the strictness of $f_t(x)$'s monotonicity and
  convexity in the neighborhood $x \approx \binom{\Delta}{2}$ implies that
  at some stage of our process, we strictly increased the objective.
  Therefore, in this case where all $\binom{b-1}{2} < k_3(v) <
  \binom{\Delta}{2}$, the number of $t$-cliques is indeed sub-optimal, and
  our proof is complete.
\end{proof}

\section{Concluding remarks}
\label{sec:conclusion}

The natural generalization of Proposition \ref{prop:cmp} considers the
maximum number of $t$-cliques in graphs with maximum degree $\Delta$ and
$n = a(\Delta+1) + b$ vertices, where $0 \leq b < \Delta + 1$.  In the
language of independent sets, this question was also proposed
by Engbers and Galvin \cite{EG12}.  The case $a = 0$ is trivial, and
Proposition \ref{prop:cmp} completely solves the case $a = 1$.  We believe that
also for $a>1$ and $t\geq 3$, $k_t(G)$ is maximized by $a K_{\Delta+1}+K_{b}$, over
$(a(\Delta+1) + b)$-vertex graphs with maximum degree at most $\Delta$.

An easy double-counting argument shows that it is true when $b=0$.  When $a
\geq 2$ and $b > 0$, the problem seems considerably more delicate.
Nevertheless, the same proof that we used in Lemma \ref{lemma:t>3} (\emph{mutatis
mutandis}) shows that the general case $t>3$ of this problem can be reduced
to the case $t = 3$.  Therefore, the most intriguing and challenging part
is to show that $a K_{\Delta+1}+K_{b}$ maximizes the number of triangles
over all graphs with $(a(\Delta+1) + b)$ vertices and maximum degree at
most $\Delta$.  We have some partial results on this main case, but our
investigation is still ongoing.


\begin{thebibliography}{99}

  \bibitem{ACM12} J. Alexander, J. Cutler, and T. Mink,
    \newblock{Independent Sets in Graphs with Given Minimum Degree,}
    \newblock{\em Electr. J. Comb.} {\bf 19} (2012), P37.

\bibitem{AM13}J. Alexander and T. Mink,
\newblock{A new method for enumerating independent sets of a fixed size in
general graphs,}
\newblock{\em arXiv preprint arXiv:1308.3242} (2013).

\bibitem{CR13} J. Cutler and A.J. Radcliffe,
\newblock{The maximum number of complete subgraphs in a graph with given maximum degree},
\newblock{\em arXiv preprint arXiv:1306.1803} (2013).

\bibitem{EG12}J. Engbers and D. Galvin,
\newblock{Counting independent sets of a fixed size in graphs with a given minimum degree},
\newblock{\em arXiv preprint arXiv:1204.3060} (2012).

\bibitem{G11} D. Galvin,
\newblock{Two problems on independent sets in graphs},
\newblock{\em Discrete Math.} {\bf 311} (2011), no. 20, 2105--2112.

\bibitem{K01} J. Kahn,
\newblock{An entropy approach to the hard-core model on bipartite graphs},
\newblock{\em Combin. Probab. Comput.} {\bf 10} (2001) no. 3, 219--237.

\bibitem{K68}G. Katona,
\newblock{A theorem of finite sets},
\newblock{\em Theory of graphs(Proc. Colloq., Tihhany, 1966)}, Academic
Press, New York 1968, pp. 187--207.

\bibitem{K63} J.B. Kruskal,
\newblock{The number of simplices in a complex},
\newblock{\em Mathematical optimization techniques}, Univ. California
Press, Berkeley, Calif., 1964, pp. 251--278.

\bibitem{LM12} H. Law and C. McDiarmid,
\newblock{On Independent Sets in Graphs with Given Minimum Degree},
\newblock{\em Combinatorics, Probability and Computing} {\bf 22} (2013),
no. 6, 874--884.

\bibitem{Lo07} L. Lov{\'a}sz,
\newblock{Combinatorial problems and exercises},
\newblock{\em American Mathematical Soc.} {\bf 361} (2007).

\bibitem{Z10} Y. Zhao,
\newblock{The number of independent sets in a regular graph},
\newblock{\em Combin. Probab. Comput.} {\bf 19}(2010) 109--112.

\end{thebibliography}
\end{document}